\newcommand{\cdbc}[1]{{D}^b_c(#1)}
\newcommand{\fmf}[3]{{\Phi^{#1}_{{\scriptscriptstyle #2\!\rightarrow\! #3}}}}
\newcommand{\Hom}{{\operatorname{Hom}}}
\newcommand{\sCM}{{\underline{\text{CM}}}}
\newcommand{\CM}{{\text{CM}}}
\newcommand{\lotimes}{{\,\stackrel{\mathbf L}{\otimes}\,}}
\DeclareMathOperator{\Img}{{Im}}
\newcommand{\cO}{{\mathcal O}}
\newcommand{\bR}{{\mathbf R}}
\newcommand{\cplx}[1]{{{\mathcal #1}^{\scriptscriptstyle\bullet}}}
\newcommand{\dcplx}[1]{{{\mathcal #1}^{\scriptscriptstyle\bullet\vee}}}
\newcommand{\dSHom}[1]{{\mathcal{H}om_{#1}^{\scriptscriptstyle\bullet}}}
\newcommand{\iso}{{\,\stackrel {\textstyle\sim}{\to}\,}}
\newtheorem{thm}{Theorem}[section]
\newtheorem*{thm*}{Theorem}
\newtheorem{cor}[thm]{Corollary}
\newtheorem{lem}[thm]{Lemma}
\newtheorem{prop}[thm]{Proposition}
\theoremstyle{definition}
\theoremstyle{remark}
\newtheorem{rema}[thm]{Remark}
\newtheorem{exe}[thm]{Example}
\numberwithin{equation}{section} 
\begin{document}
\title{Fourier-Mukai partners of singular genus one curves}

\thanks {{\it Author's address: }Departamento de Matem\'aticas, Universidad de Salamanca, Plaza de la Merced 1-4, 37008, Salamanca, tel: +34 923294456; fax +34 923294583, anacris@usal.es.\\
Work supported by the research project MTM2009-07289 (MEC)\\
}
\subjclass[2000]{Primary: 18E30, 14F05; Secondary: 18E25, 14H52} \keywords{Fourier-Mukai partners, elliptic
curve, autoequivalence, Kodaira degenerations}
\date{\today}


\author[A. C. L\'opez Mart\'{\i}n]{Ana Cristina L\'opez Mart\'{\i}n}
\address{Departamento de Matem\'aticas and Instituto Universitario de F\'{\i}sica Fundamental y Matem\'aticas
(IUFFyM), Universidad de Salamanca, Plaza de la Merced 1-4, 37008
Salamanca, Spain.}

\begin{abstract}  
The objective of the paper is to prove that,  as it happens for smooth elliptic curves, any Fourier-Mukai partner of  a projective reduced Gorenstein curve of genus one and trivial dualising sheaf,  is isomorphic to itself. \end{abstract}

\maketitle
\setcounter{tocdepth}{1}


\section{Introduction} Let $X$ be a projective scheme and $\cdbc{X}$ the bounded derived category of coherent sheaves on $X$. It is an interesting problem to find all projective schemes $Y$ with   $\cdbc{X}\simeq  \cdbc{Y}$, that is, to determine the set $FM(X)$ of  projective Fourier-Mukai partners of $X$. 

The aim of this paper is to prove the following 
\begin{thm}   Let $X$ be a projective reduced connected Gorenstein curve of arithmetic genus one and trivial dualising sheaf over an algebraically closed field $k$ of characteristic zero. Then any projective Fourier-Mukai partner $Y$ of $X$ is isomorphic to $X$, that is, $FM(X)=\{X\}$.
\end{thm}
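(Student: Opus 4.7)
The plan is to first extract enough structural information about $Y$ from the derived equivalence to place it in the Kodaira list of reduced Gorenstein genus one curves, then to match its Kodaira type with that of $X$, and finally to realise $Y$ as a moduli space of simple sheaves on $X$, invoking the known identification of such a moduli space with $X$ itself.

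For the first step, let $\Phi : \cdbc{X}\to \cdbc{Y}$ denote the equivalence. The mere existence of a Serre functor on $\cdbc{Y}$ forces $Y$ to be Cohen-Macaulay, and the intertwining $\Phi\circ S_X\cong S_Y\circ \Phi$ together with $S_X = [1]$ (from the Gorenstein and trivial $\omega_X$ hypothesis) gives $\dim Y = 1$ and $\omega_Y\cong \cO_Y$, so that $Y$ is a projective Gorenstein curve with trivial dualising sheaf. Connectedness of $Y$ and $p_a(Y)=1$ follow from derived invariance of $\Hom(\cO,\cO)$ and $\chi(\cO)$; reducedness I would obtain either by a direct local argument on any putative embedded component or by realising $Y$ set-theoretically as parametrising simple objects in $\cdbc{X}$. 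Thus $Y$ lies in the Kodaira list of reduced Gorenstein genus one curves with trivial $\omega$: the smooth elliptic curve, the irreducible rational curves with a single node $(I_1)$ or cusp $(II)$, the cycle of $n\geq 2$ smooth rational curves $(I_n)$, two rational curves meeting tangentially $(III)$, and three concurrent lines $(IV)$.

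Next, to match the Kodaira types of $X$ and $Y$, I would use derived invariants that see both the local and the global geometry: the local singularity category $D_{sg}(\cO_{X,x})$ at each singular point is preserved by $\Phi$ and distinguishes node from cusp, while the number of irreducible components can be read off from $\Pic^0$ or from the triangulated subcategory generated by skyscrapers at smooth points of a fixed component. Once $Y$ has the same Kodaira type as $X$, the final step is moduli-theoretic: for each closed point $y\in Y$, the object $\Phi^{-1}(\cO_y)\in \cdbc{X}$ is simple with the same Ext-algebra as $\cO_y$, so up to a shift it is a simple semistable sheaf on $X$ of fixed numerical type. The resulting classifying morphism $Y\to \mathcal{M}(X)$ into the corresponding moduli space is an isomorphism onto a connected component, and one concludes using the theorem — due to Atiyah for smooth elliptic curves and extended to Weierstrass cubics in the recent literature on semistable sheaves over singular genus one curves — that such a component is always isomorphic to $X$.

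The main obstacle is the matching of Kodaira type in the middle step: distinguishing for instance $I_1$ from $I_2$, or $II$ from $III$ and $IV$, requires invariants of the derived category sensitive both to the analytic type of the singularities and to the dual graph of components, and their construction is where the substantial new work lies. Once the type is matched, the moduli-theoretic identification is a fairly direct adaptation of Mukai's classical argument for smooth elliptic curves to the Gorenstein singular setting.
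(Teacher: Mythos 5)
Your outline reproduces the first stage of the paper's argument (transferring the Gorenstein, dimension, trivial-$\omega$, connectedness, genus and reducedness properties to $Y$, placing it in Catanese's list), but the decisive middle step is exactly the part you defer: you write that constructing derived invariants sensitive to the analytic type of the singularities and to the number of components ``is where the substantial new work lies'' --- and that work is the actual content of the proof, so as it stands there is a genuine gap. Concretely, the paper counts irreducible components via the Grothendieck group: for a reduced connected projective curve all of whose components are rational, $K_\bullet(Z)\simeq\mathbb{Z}^{N+1}$ with $N$ the number of components, and $K_\bullet$ is a derived invariant. Your proposed substitutes do not obviously work: $\Pic^0$ of a cycle of $N$ lines is $\mathbb{G}_m$ for every $N$, so it cannot see $N$, and it is in any case not clearly a derived invariant; ``the subcategory generated by skyscrapers at smooth points of a fixed component'' is not an intrinsically defined subcategory of $\cdbc{Y}$ without already knowing the component structure. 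For the node-versus-cusp dichotomy the paper shows $D_{\text{sg}}(X)\simeq D_{\text{sg}}(Y)$ (this needs that a Fourier--Mukai equivalence and its quasi-inverse preserve perfect complexes, via finite homological dimension of the kernel over both factors), passes to idempotent completions to get $\sCM(\hat{\mathcal{O}}_{X,P})\simeq\sCM(\hat{\mathcal{O}}_{Y,Q})$, and then counts indecomposable maximal Cohen--Macaulay modules over the $A_1$ and $A_2$ singularities using Yoshino's computation of the AR-quivers; these counts differ, giving the contradiction. Asserting that the ``local singularity category is preserved and distinguishes node from cusp'' names the right invariant but supplies neither the preservation argument nor the computation.

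Your final moduli-theoretic step is also a genuine departure from the paper, and it is both unnecessary and harder than you suggest. Unnecessary, because once the type in Catanese's list is matched the singular curves are rigid (each singular type is a single isomorphism class), and the smooth case is already settled by Hille--Van den Bergh, which the paper simply cites. Harder, because on a singular curve $\mathrm{Coh}(X)$ is not hereditary (skyscrapers at singular points have nonvanishing self-extensions in all degrees), so $\Phi^{-1}(\mathcal{O}_y)$ does not automatically split as a shift of a single sheaf, and identifying the relevant component of the moduli space of simple semistable sheaves with $X$ for non-integral fibers (cycles, tacnodes, concurrent lines) depends on polarization and coprimality hypotheses that you would need to arrange. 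If you want to keep the moduli route you must address both points; otherwise, replace it by the rigidity observation and the citation for the smooth case, and concentrate your effort on making the two invariance arguments (component count via $K_\bullet$, singularity type via $\overline{D_{\text{sg}}}\simeq\oplus\,\sCM(\hat{\mathcal{O}}_{x_i})$) precise.
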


The curves appearing in the statement of the theorem were classified by Catanese in \cite{Cata82}. Namely, if $X$ is a projective reduced connected Gorenstein curve of arithmetic genus one and trivial dualising sheaf, by Proposition 1.18 in  \cite{Cata82}, it is isomorphic:

\begin{enumerate}
\item Either to a Kodaira curve (always with locally planar singularities), that is,
\begin{enumerate}
\item a smooth elliptic curve, 
\item a rational curve with one node (following Kodaira's notation, that is a  curve of type $I_1$), 
\item a rational curve with one cusp (a curve of type $I_2$),
\item  a cycle of $N$ rational smooth curves (a curve of type $I_N$) with $N\geq 2$ , 
\item two rational smooth curves forming a tacnode curve  (a curve of type $II$) or 
\item three concurrent rational smooth curves in the plane (a curve of  type $IV$).
\end{enumerate} 
\item Or to a curve consisting of $N\geq 4$ rational smooth curves meeting at a point $x$  where the tangents to the branches are linearly dependent, but any $(N-1)$ of them are independent.
\end{enumerate}

Note that, by results of Kodaira and Miranda, the curves in (1) are exactly all the possible reduced fibers  appearing in a smooth elliptic surface or in a smooth elliptic threefold. This explains why they are called Kodaira curves.

The theorem was  just known for smooth elliptic curves. In this case, it was proved by Hille and Van den Bergh in \cite{HiVdB}.  For the integral singular curves in the above list, that is, for $X$ a rational curve with one node or a cusp,  Burban and KreŸssler  study in \cite{BuKr05} the derived category $\cdbc{X}$ and its group $\text{Aut}(\cdbc{X})$ of autoequivalences, but they don't tackle the question of  Fourier-Mukai partners. Thus our contribution is to pass from the classical case of a smooth elliptic curve to the singular case generalizing the result to all  singular curves of Catanese's list.
 
 In 1998, Bridgeland computes all Fourier-Mukai partners of a smooth elliptic surface. He proves in \cite{Bri98} that the partners of relatively minimal smooth elliptic surfaces are certain relative compactified Jacobians. Some recent works \cite{ChuLo, Lo} are concerned about higher dimensional elliptic fibrations. But, for the moment there is not a similar classification for the partners of higher dimensional varieties fibered by elliptic curves. Beyond its own interest, let us mention that one of the main applications of our Theorem is that it could be useful for classifying projective Fourier-Mukai partners of any elliptic fibration, that is, a flat morphism $p\colon Z\to S$ of projective schemes whose general fiber is a smooth elliptic curve. No more condition is required on either $S$ or the total space $Z$ that could be even singular. The reason is that Proposition 2.15 in \cite{HLS08} proves that a relative integral functor between the derived categories of two projective fibrations is an equivalence if and only if its restriction to every fiber (not only the smooth ones) is an equivalence. Then, if $q\colon W\to S$ is a relative Fourier-Mukai partner of $p\colon Z\to S$, for every $s\in S$ the fibers $Z_s$ and $W_s$ are Fourier-Mukai partners as well. Thus for example, by Miranda's result, to classify the partners of an elliptic threefold, it would be helpful to know about the partners of all the curves of (1) in Catanese's  list. On the other hand, the structure of some relative compactified Jacobians for certain elliptic fibrations is already known (see \cite{LM05a}).
 
To finish, one should point out that elliptic fibrations have been used is string theory, notably in connection with mirror symmetry in Calabi-Yau manifolds and $D$-branes (see \cite{BBH08} for a good survey). Some of the classic examples of families of Calabi-Yau manifolds for which there is a description of the mirror family are elliptic fibrations \cite{Can91}. Moreover there is a relative Fourier-Mukai transform for most elliptic fibrations \cite{HMP02, HLS08} that can be
understood in terms of duality in string theory \cite{Don98,
DoPan03, DoPan12} or D-brane theory. 
More generally, due to the interpretation of B-type D-branes as objects of the derived category  \cite{Kon95} and  to
Kontsevich's  homological mirror symmetry proposal \cite{Kon95}, one expects the
Fourier-Mukai transform (or its relative version) to act on the spectrum of D-branes. The study of D-branes on
Calabi-Yau manifolds inspired in fact the search of new Fourier-
Mukai partners \cite{Or97, Kaw02a, Ue04, HKTY}  among other mathematical problems.
 
In this paper we will always work over an  algebraically closed  field $k$ of characteristic zero and all schemes are assumed to be separated noetherian and of finite type over $k$.

{\it Acknowledgements:} I thank Dar\'{\i}o S\'anchez G\'omez and Carlos Tejero Prieto for many useful discussions.

 \section{The proof of theorem 1.1}
 
 In the proof of our result, we will make use of the following facts.
  
\subsection{} Let $X$ and $Y$ be two proper schemes over $k$ and let $\cplx K$ be an object in $D^-(X\times Y)$. The {\it integral functor} defined by $\cplx{K}$ is the functor $\Phi^{\cplx K}_{X\to Y}\colon D^-(X)\to D^-(Y)$ given by 
$$\Phi_{X\to Y}^{\cplx K}(\cplx{E})=\bR \pi_{Y_\ast}( \pi_X^\ast\cplx{E}\lotimes\cplx{K})\, ,$$
where $\pi_X\colon X\times Y\to X$ and $\pi_Y\colon X\times Y\to Y$ are the two projections. 
The complex $\cplx K$ is said to be the kernel of the integral functor. 

\subsection{} Our theorem is just stated for  projective curves and the reason for it is that the projective context provides the following important advantage. Due to a famous result of Orlov \cite{Or97}, as extended by Ballard \cite{Ballard09} to the singular case, if $X$ and $Y$ are projective schemes over $k$ and $F\colon \cdbc{X}\simeq \cdbc{Y}$ is an equivalence of categories, then there is a unique (up to isomorphism) $\cplx{K}\in \cdbc{X\times Y}$ such that $F\simeq \Phi_{X\to Y}^{\cplx K}$, that is, any equivalence between the bounded derived categories of two projective schemes is a Fourier-Mukai functor. This result is now a consequence of a more general result by Lunts and Orlov \cite{OL10}.

 \subsection{} The existence of a Fourier-Mukai functor $\Phi_{X\to Y}^\cplx{K}\colon \cdbc{X}\simeq \cdbc{Y}$ between the derived categories of two projective schemes $X$ and $Y$ has important geometrical consequences. For instance, it is well know that if $X$ is a smooth scheme, then $Y$ is also smooth  (see for instance \cite{Bri98t}). In the same vein, Theorem 4.4 in \cite{HLS08} shows that if $X$ is an equidimensional Cohen-Macaulay  (resp. Gorenstein) scheme and $Y$ is reduced, then $Y$ is also equidimensional of the same dimension that $X$ and it is  Cohen-Macaulay (resp. Gorenstein) as well.
 
 Moreover, if $X$ is Gorenstein, Proposition 1.30 in \cite{HLS07} proves that $\omega_X$ is trivial if and only if $\omega_Y$ is trivial.

Furthermore,  one has the following result whose proof for smooth projective varieties was given by Orlov (Corollary 2.1.9 in \cite{Or003}).

\begin{prop}
Let  $X$ and $Y$ be two Gorenstein projective partners. Then, there is an  isomorphism  $$\oplus_{i\geq 0}H^0(X, \omega_X^i)\simeq \oplus_{i\geq 0}H^0(Y, \omega_Y^i)\, $$
between  the graded canonical algebras.
 \end{prop}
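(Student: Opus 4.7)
The plan is to extend Orlov's argument for smooth projective varieties (\cite[Cor.~2.1.9]{Or003}) to the Gorenstein setting. By the results of Section~2.3 already recalled, since $X$ is Gorenstein projective and $Y$ is a Fourier-Mukai partner of $X$, the scheme $Y$ is also Gorenstein projective of the same dimension $n$ as $X$; in particular both $\omega_X$ and $\omega_Y$ are invertible sheaves. Grothendieck-Serre duality then equips the bounded derived categories with Serre functors given explicitly by $S_X(-)=(-)\lotimes\omega_X[n]$ and $S_Y(-)=(-)\lotimes\omega_Y[n]$; the formula from the smooth case still makes sense because tensoring with an invertible sheaf preserves $\cdbc{X}$ and $\cdbc{Y}$.

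The second ingredient is the classical fact of Bondal-Kapranov that any $k$-linear equivalence between $\Hom$-finite triangulated categories endowed with Serre functors commutes with them. If $F=\fmf{\cplx K}{X}{Y}\colon\cdbc{X}\iso\cdbc{Y}$ denotes our equivalence, one thus obtains $F\circ S_X^i\simeq S_Y^i\circ F$ for every $i\geq 0$. Both sides are Fourier-Mukai functors by the Orlov-Ballard representability theorem recalled in Section~2.2, and the uniqueness of the kernel translates this into an isomorphism
$$\cplx{K}\lotimes\pi_X^\ast\omega_X^i\;\simeq\;\cplx{K}\lotimes\pi_Y^\ast\omega_Y^i$$
in $\cdbc{X\times Y}$ (the common shift $[in]$ cancels).

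To pass from this kernel-level statement to an isomorphism of canonical algebras, I would work inside $\cdbc{X\times X}$ with its convolution product $\rast$. The equivalence $F$ induces a conjugation equivalence $\Phi_F\colon\cdbc{X\times X}\iso\cdbc{Y\times Y}$ by convolving with $\cplx K$ on one side and with a kernel of $F^{-1}$ on the other. This $\Phi_F$ sends $\Delta_{X\ast}\cO_X$ (the kernel of $\Id_{\cdbc X}$) to $\Delta_{Y\ast}\cO_Y$ and, in view of the previous paragraph, sends $\Delta_{X\ast}\omega_X^i$ (the kernel of $S_X^i[-in]$) to $\Delta_{Y\ast}\omega_Y^i$. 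Since the diagonal is a closed immersion, $\Delta_\ast$ is fully faithful on coherent sheaves, and applying $\Hom$ yields
$$H^0(X,\omega_X^i)=\Hom_{X\times X}(\Delta_{X\ast}\cO_X,\Delta_{X\ast}\omega_X^i)\;\simeq\;\Hom_{Y\times Y}(\Delta_{Y\ast}\cO_Y,\Delta_{Y\ast}\omega_Y^i)=H^0(Y,\omega_Y^i).$$

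The multiplicative structure of the canonical algebra corresponds, under this identification, to the composition law induced by $\rast$ via the isomorphisms $\Delta_{X\ast}\cO_X\rast\Delta_{X\ast}\cO_X\simeq\Delta_{X\ast}\cO_X$ and $\Delta_{X\ast}\omega_X^i\rast\Delta_{X\ast}\omega_X^j\simeq\Delta_{X\ast}\omega_X^{i+j}$, which just reflect $\Id\circ\Id=\Id$ and $S_X^i\circ S_X^j\simeq S_X^{i+j}$. Since $\Phi_F$ is monoidal for $\rast$, being conjugation by an equivalence of categories, the resulting isomorphism is one of graded algebras. The main obstacle I anticipate is precisely making this monoidality of $\Phi_F$ rigorous and checking that the Serre-functor calculus on $\cdbc{X}$ goes through cleanly for singular Gorenstein $X$; both ultimately reduce to Grothendieck duality for projective Gorenstein schemes, but the bookkeeping is heavier than on a smooth variety because one must work throughout with the dualising complex rather than with a shifted line bundle from the outset.
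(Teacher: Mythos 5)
Your overall architecture is the same as the paper's (which follows Orlov): produce a kernel-level identity identifying the $\omega_X$-twist with the $\omega_Y$-twist, conjugate to get an equivalence $\cdbc{X\times X}\iso\cdbc{Y\times Y}$ sending $\delta_{X\ast}\omega_X^i$ to $\delta_{Y\ast}\omega_Y^i$, and read off the graded algebra via $\Hom$'s from the structure sheaf of the diagonal. But the step you use to launch this — that $\cdbc{X}$ carries a Serre functor $S_X=(-)\lotimes\omega_X[n]$ and that Bondal--Kapranov then forces $F\circ S_X\simeq S_Y\circ F$ — is exactly the step that fails in the singular Gorenstein setting, and it is the one point the paper explicitly flags and routes around. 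For singular $X$ the functor $(-)\otimes\omega_X[n]$ is \emph{not} a Serre functor on all of $\cdbc{X}$: Serre duality $\Hom(\cplx{E},\cplx{F})\simeq\Hom(\cplx{F},\cplx{E}\otimes\omega_X[n])^\ast$ only holds when one argument is perfect. Concretely, for $x$ a singular point of a Gorenstein curve, $\Ext^i(k(x),k(x))\neq 0$ for all $i\geq 0$, whereas a Serre functor of the form $\otimes\,\omega_X[1]$ would force $\Ext^i(k(x),k(x))\simeq\Ext^{1-i}(k(x),k(x))^\ast=0$ for $i>1$. So the Bondal--Kapranov commutation argument has no Serre functors to commute with, and your kernel identity is left unproved.

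The conclusion you want from that step is nevertheless true and is the paper's equation \eqref{e:dual}: it is obtained not from Serre functors but from the fact that the quasi-inverse of an equivalence is simultaneously its left and right adjoint, combined with the explicit Grothendieck-duality formulas for the two adjoint kernels, namely $\dcplx{K}\otimes\pi_Y^\ast\omega_Y[\dim Y]$ and $\dcplx{K}\otimes\pi_X^\ast\omega_X[\dim X]$ (valid because the kernel of an equivalence between projective schemes has finite homological dimension over both factors), together with uniqueness of kernels and the equality $\dim X=\dim Y$. If you replace your Serre-functor paragraph with this adjoint-kernel argument, the rest of your outline (conjugation by $\cplx{L}\boxtimes\cplx{K}$, fully faithfulness, and the composition/convolution description of the product) matches the paper's proof; the ``monoidality'' you defer is handled there by a direct computation with the projection formula and adjunction followed by uniqueness of the kernel, rather than by an abstract monoidal structure on the convolution category.
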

 
 \begin{proof} The proof in the  smooth case is based in the use of Serre's functors. For Gorenstein schemes, those functors only exist restricting to the subcategories of perfect complexes, then we proceed as follows.  By the representability theorem of Ballard-Orlov, there is a unique (up to isomorphism)  $\cplx{K}\in \cdbc{X\times Y}$, such that the equivalence $\cdbc{X}\simeq \cdbc{Y}$ is given by the  corresponding Fourier-Mukai functor $\fmf{\cplx{K}}{X}{Y}$. Using that both schemes have the same dimension and that, by Proposition 1.30 in \cite{HLS07}, the right and left adjoints to $\fmf{\cplx{K}}{X}{Y}$ are functorially isomorphic, we have an isomorphism of complexes
 \begin{equation}\label{e:dual} \cplx{L}:=\ \dcplx{K}\otimes \pi_X^\ast\omega_X\simeq\dcplx{K}\otimes
\pi_Y^\ast\omega_Y\, .
 \end{equation}
 If $\cplx{M}:=\cplx{L}\boxtimes\cplx{K}\in \cdbc{(X\times X)\times (Y\times Y)}$, Proposition 1.10 in \cite{LST10} proves that the functor $$\fmf{\cplx{M}}{X\times X}{Y\times Y}\colon \cdbc{X\times X}\iso \cdbc{Y\times Y}$$ is  an equivalence of categories. Let $\delta_X\colon X\hookrightarrow X\times X$ be the diagonal immersion of $X$ and for $n\in \mathbb{Z}$ consider $\cplx{H}:=\fmf{\cplx{M}}{X\times X}{Y\times Y}(\delta_{X\ast}\omega_X^n)$. Arguing exactly as in (\cite{Or003}, Proposition 2.16), one gets the isomorphism $$\fmf{\cplx{H}}{Y}{Y}\simeq \fmf{\cplx{K}}{X}{Y}\circ\fmf{\delta_{X\ast}\omega_X^n}{X}{X}\circ \fmf{\cplx{L}}{Y}{X}\, ,$$
and this  composition is isomorphic to $\fmf{\delta_{Y\ast}\omega_Y^n}{Y}{Y}$. Indeed, for any $\cplx{F}\in \cdbc{Y}$ one  has
\begin{align*}
 \fmf{\cplx{K}}{X}{Y}\circ\fmf{\delta_{X\ast}\omega_X^n}{X}{X}\circ \fmf{\cplx{L}}{Y}{X}(\cplx{F}) \simeq & 
\fmf{\cplx{K}}{X}{Y}(\bR\pi_{X_\ast}(\pi_Y^\ast \cplx{F}\lotimes\dcplx{K}\otimes \pi_X^\ast\omega_X^{n+1}))\\
\simeq &\fmf{\cplx{K}}{X}{Y}(\bR\pi_{X_\ast}(\pi_Y^\ast \cplx{F}\lotimes\dcplx{K}\otimes \pi_Y^\ast\omega_Y^{n+1}))\\
\simeq &\fmf{\cplx{K}}{X}{Y}\circ\fmf{\dcplx{K}\otimes\pi_Y^\ast\omega_Y}{Y}{X}(\cplx{F}\otimes \omega_Y^n)\\
\simeq & \cplx{F}\otimes \omega_Y^n\simeq \fmf{\delta_{Y\ast}\omega_Y^n}{Y}{Y}(\cplx{F})
\end{align*}
where the first isomorphism is projection formula for $\pi_X$, the second is obtained by equation \eqref{e:dual} and the forth is the adjunction.
 
 By the uniqueness of the kernel, we obtain  $$\cplx{H}\simeq \delta_{Y\ast}\omega_Y^n\, .$$
 Since $\fmf{\cplx{M}}{X\times X}{Y\times Y}$ is an equivalence, Parseval formula gives us isomorphisms 
 $$\Hom(\delta_{X\ast}\omega_X^i,\delta_{X\ast}\omega_X^j)\simeq \Hom(\delta_{Y\ast}\omega_Y^i, \delta_{Y\ast}\omega_Y^j)$$
for all $i,j\in \mathbb{Z}$.  Taking $i=0$, one finds
$$H^0(X,\omega_X^j)\simeq H^0(Y, \omega_Y^j)$$ for all $j\in\mathbb{Z}$. 
 Moreover, since $X$ is Gorenstein,  $\omega_X$ is a line bundle, and then $$\Hom(\mathcal{O}_X,\omega_X^j)\simeq\Hom (\omega_X^k,\omega_X^{k+j})$$    for all $k$ and similarly for $Y$.  Bearing in mind that the algebra structure is defined by composition, the induced bijection $$\oplus_{j} H^0(X,\omega_X^j)\simeq \oplus_{j}H^0(Y,\omega_Y^j))$$ is a ring isomorphism. \end{proof}

From this Proposition, we obtain in particular that if  $X$ and $Y$ are curves, they have the same arithmetic genus.

To finish this part, notice that the connectedness of a scheme  is also a property invariant under Fourier-Mukai functors, since a scheme $X$ is connected if and only if its bounded derived category $\cdbc{X}$ is indecomposable (see Example 3.2 in \cite{Bri99}).

 \subsection{} Let $K_\bullet(X):=
K(\cdbc{X})$ be the Grothendieck group of the triangulated category $\cdbc{X}$, that is,  the quotient of the free Abelian group generated by the objects of $\cdbc{X}$ modulo expressions coming from distinguished triangles.

To pass from $\cdbc{X}$ to $K_\bullet(X)$ one considers the map $[\quad]\colon \cdbc{X}\to K_\bullet(X)$ given by $\cplx{F}\mapsto [\cplx F]=\sum(-1)^i[\mathcal{H}^i(\cplx F)]$. Any integral functor $\Phi=\fmf{\cplx K}{X}{Y}\colon \cdbc{X}\to \cdbc{Y}$ induces a group morphism $\phi\colon K_\bullet(X)\to K_\bullet(Y)$ that commutes with the projections $[\quad]\colon \cdbc{X}\to K_\bullet(X)$, that is, such that the following diagram commutes 
$$
\xymatrix{
\cdbc{X}\ar[r]^\Phi\ar[d]_{[\quad]} &\cdbc{Y}\ar[d] ^{[\quad]} \\
K_\bullet(X)\ar[r]^{\phi}& K_\bullet(Y).}
$$
This morphism is given by $\phi(\alpha)={\pi_Y}_!\big(\pi_X^\ast\alpha\otimes [\cplx K]\big)$. Remember that a complex in $\cdbc{X}$ is called {\it perfect} if it is locally isomorphic to a bounded complex of locally free sheaves of finite rank and notice that $\phi$ is well defined if  $K_\bullet(X)$  is generated by perfect objects. Moreover if $\Phi$ is an equivalence then $\phi$ is an isomorphism.

 \subsection{}  Following Orlov, we will say that a scheme $X$ satisfies the (ELF) condition if  it is separated, noetherian, of finite Krull dimension and the category of coherent sheaves on $X$ has enough locally free sheaves. This is the case, for instance, of any quasi-projective scheme. The (ELF) condition on $X$ implies that any perfect complex is globally (and not only locally) isomorphic to a bounded complex of locally free sheaves of finite rank.  Perfect complexes form a full triangulated subcategory $\text{Perf}(X)\subseteq \cdbc{X}$ which is thick. For a scheme $X$ satisfying (ELF) condition, Orlov introduced in \cite{Or03} a new invariant, called {\it the category of singularities} of $X$, which is defined as the Verdier quotient triangulated category $$D_\text{sg}(X):=\cdbc{X}/\text{Perf}(X)$$

Denote by $q\colon \cdbc{X}\to D_\text{sg}(X)$ the natural localization functor.

One has the following evident lemma:

\begin{lem} Let $\mathcal{C}'$ and ${\mathcal D}'$ be full triangulated subcategories in triangulated categories $\mathcal{C}$ and $\mathcal{D}$. Let $F\colon \mathcal{C}\to \mathcal{D}$ and $G\colon \mathcal{D}\to \mathcal{C}$ be an adjoint pair of exact functors satisfying $F({\mathcal C}')\subset {\mathcal D}'$ and $G({\mathcal D}')\subset {\mathcal C}'$. Then they induce functors $$\overline{F}\colon {\mathcal C}/{\mathcal C}'\to {\mathcal D}/{\mathcal D}',\quad \overline{G}\colon {\mathcal D}/{\mathcal D}' \to {\mathcal C}/{\mathcal C}'$$ which are also an adjoint pair.  \qed
\end{lem}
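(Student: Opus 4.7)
The plan is to invoke the universal property of the Verdier quotient to construct $\overline{F}$ and $\overline{G}$, and then to descend the unit and counit of the adjunction $(F,G)$ along the localization functors $q_\mathcal{C}\colon\mathcal{C}\to\mathcal{C}/\mathcal{C}'$ and $q_\mathcal{D}\colon\mathcal{D}\to\mathcal{D}/\mathcal{D}'$.

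First I would recall that the Verdier quotient $\mathcal{C}/\mathcal{C}'$ is characterized by a universal property: any exact functor out of $\mathcal{C}$ that sends every object of $\mathcal{C}'$ to a zero object factors uniquely through $q_\mathcal{C}$. Since $F$ is exact (triangulated) and $F(\mathcal{C}')\subset\mathcal{D}'$, the composition $q_\mathcal{D}\circ F\colon\mathcal{C}\to\mathcal{D}/\mathcal{D}'$ kills $\mathcal{C}'$, so it factors through a unique exact functor $\overline{F}\colon\mathcal{C}/\mathcal{C}'\to\mathcal{D}/\mathcal{D}'$ with $\overline{F}\circ q_\mathcal{C}=q_\mathcal{D}\circ F$. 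Exactly the symmetric construction, using $G(\mathcal{D}')\subset\mathcal{C}'$, yields $\overline{G}\colon\mathcal{D}/\mathcal{D}'\to\mathcal{C}/\mathcal{C}'$ with $\overline{G}\circ q_\mathcal{D}=q_\mathcal{C}\circ G$.

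Next I would produce the unit and counit of the candidate adjunction. Let $\eta\colon\Id_\mathcal{C}\to G\circ F$ and $\varepsilon\colon F\circ G\to\Id_\mathcal{D}$ be the unit and counit of $(F,G)$. Applying $q_\mathcal{C}$ to $\eta$ and using the factorization identities for $\overline{F},\overline{G}$, one obtains a natural transformation
\[
\overline{\eta}\colon\Id_{\mathcal{C}/\mathcal{C}'}\longrightarrow \overline{G}\circ\overline{F},
\]
defined on an object $q_\mathcal{C}(X)$ as $q_\mathcal{C}(\eta_X)$; one must check this descends to well-defined morphisms in the quotient between arbitrary objects, which is immediate from the naturality of $\eta$ together with the universal property (any morphism in $\mathcal{C}/\mathcal{C}'$ is represented by a roof, and $\eta$ is natural along both legs of the roof). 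The counit $\overline{\varepsilon}\colon\overline{F}\circ\overline{G}\to\Id_{\mathcal{D}/\mathcal{D}'}$ is constructed in the same way from $\varepsilon$.

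Finally I would verify the triangle identities $\overline{\varepsilon}\,\overline{F}\circ\overline{F}\,\overline{\eta}=\Id_{\overline{F}}$ and $\overline{G}\,\overline{\varepsilon}\circ\overline{\eta}\,\overline{G}=\Id_{\overline{G}}$. Both are identities between natural transformations of functors out of $\mathcal{C}/\mathcal{C}'$ or $\mathcal{D}/\mathcal{D}'$; by the universal property it suffices to check them after precomposition with $q_\mathcal{C}$ or $q_\mathcal{D}$, at which point they become the images under $q$ of the corresponding triangle identities for $(F,G)$ and hence hold. I expect the only real subtlety to be the well-definedness of $\overline{\eta}$ and $\overline{\varepsilon}$ on morphisms of the quotient represented by roofs; once that naturality check is in place—and it really is routine, given that $F$ and $G$ send quasi-isomorphisms (morphisms with cone in $\mathcal{C}'$ or $\mathcal{D}'$) to morphisms of the same type—the rest of the proof is purely formal, which is why the authors can label the lemma as evident.
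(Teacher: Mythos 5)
Your proof is correct: the paper itself gives no argument (it labels the lemma ``evident'' and states it with a \qed), and your construction---inducing $\overline{F}$ and $\overline{G}$ via the universal property of the Verdier quotient and descending the unit and counit, with the triangle identities checked after precomposition with the localization functors---is exactly the standard argument one would supply. The only remark worth adding is that the well-definedness of $\overline{\eta}$ and $\overline{\varepsilon}$ is even more routine than you suggest, since the quotient functor is the identity on objects and naturality along both legs of a roof follows directly from naturality of $\eta$ and $\varepsilon$ in $\mathcal{C}$ and $\mathcal{D}$, using that $F$ and $G$ carry morphisms with cone in $\mathcal{C}'$ (resp.\ $\mathcal{D}'$) to morphisms of the same type.
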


\begin{cor} \label{Dsg} Let $X$ be a projective scheme over $k$. If $Y\in FM(X)$ is a projective partner, then $D_\text{sg}(X)\simeq D_\text{sg}(Y)$.
\end{cor}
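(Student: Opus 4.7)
The plan is to apply Lemma 2.4 to the Fourier-Mukai equivalence $\cdbc{X}\simeq\cdbc{Y}$ and its quasi-inverse, once we know that both functors preserve the thick subcategory of perfect complexes.

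By the Ballard-Orlov representability theorem recalled in 2.2, the equivalence $F\colon\cdbc{X}\iso\cdbc{Y}$ is a Fourier-Mukai functor $F=\fmf{\cplx{K}}{X}{Y}$ for some $\cplx K\in\cdbc{X\times Y}$, and its quasi-inverse $G\colon\cdbc{Y}\iso\cdbc{X}$ is again a Fourier-Mukai functor. Since $G$ is simultaneously the left and the right adjoint of $F$, the pair $(F,G)$ is an adjoint pair of exact functors in the sense of Lemma 2.4.

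The decisive step is to verify that $F(\text{Perf}(X))\subseteq\text{Perf}(Y)$ and $G(\text{Perf}(Y))\subseteq\text{Perf}(X)$. This is a well-known consequence of an equivalence between derived categories of projective schemes. It can be argued by an intrinsic characterization of perfect complexes, for example the fact that they are precisely the compact objects in the ambient unbounded derived category $D(\text{QCoh})$, to which a Fourier-Mukai equivalence naturally extends. Alternatively, for a projective scheme over $k$ a complex $\cplx F\in \cdbc{X}$ is perfect if and only if it has finite Ext-amplitude against coherent sheaves, i.e.\ there exists an integer $N=N(\cplx F)$ with $\Hom_{\cdbc{X}}(\cplx F,\cG[i])=0$ for every coherent $\cG$ and every $|i|>N$, and this categorical property is respected by the equivalence.

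With the preservation established, Lemma 2.4 applied to $(F,G)$ and to the thick subcategories $\text{Perf}(X)\subset\cdbc{X}$ and $\text{Perf}(Y)\subset\cdbc{Y}$ produces induced adjoint functors $\overline F\colon D_{\text{sg}}(X)\to D_{\text{sg}}(Y)$ and $\overline G\colon D_{\text{sg}}(Y)\to D_{\text{sg}}(X)$. The unit $\Id\to GF$ and counit $FG\to\Id$ of the original adjunction are isomorphisms; their images under the localisation functors $q_X$ and $q_Y$ furnish the unit and counit of the induced adjunction, so they remain isomorphisms. Hence $\overline F$ is an equivalence with quasi-inverse $\overline G$, yielding $D_{\text{sg}}(X)\simeq D_{\text{sg}}(Y)$. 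The main obstacle is the preservation-of-perfect-complexes step; once it is in hand, the remainder of the argument is a formal consequence of Lemma 2.4 and the behaviour of adjoint pairs under Verdier localisation.
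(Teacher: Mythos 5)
Your proof is correct and follows the same skeleton as the paper's --- Ballard--Orlov representability, preservation of $\text{Perf}$ in both directions, then Lemma 2.4 --- but the decisive step is justified by a genuinely different argument. The paper works with the kernel: it cites the fact that the kernel of a Fourier--Mukai equivalence between projective schemes has finite homological dimension over both factors (Proposition 2.10 of \cite{HLS08}), which forces the functor and its quasi-inverse to preserve perfect complexes (Lemma 1.8 of \cite{LST10}). You instead invoke an intrinsic, purely categorical characterization of perfect complexes inside $\cdbc{X}$. Your route is in a sense more general: once perfection is characterized categorically, \emph{any} exact equivalence preserves it, so the representability theorem is not even needed for this corollary (a quasi-inverse of an equivalence is automatically both adjoints). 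The paper's route is more concrete and leans on results already in its toolbox. One caveat on your formulation: of your two suggested characterizations, the first (compact objects of $D(\mathrm{QCoh})$) requires knowing that the given equivalence of bounded categories extends to an equivalence of the unbounded ones, which is true but not free; and the second, as literally stated, tests against \emph{coherent} sheaves $\cG$, which is not a manifestly categorical class since the equivalence need not respect the standard t-structure. The standard fix is Orlov's notion of homologically finite objects: $\cplx{F}$ is perfect if and only if for every $\cplx{G}\in\cdbc{X}$ one has $\Hom(\cplx{F},\cplx{G}[i])=0$ for $|i|\gg 0$ (bound depending on $\cplx{G}$); this is equivalent to your condition for quasi-projective schemes and is visibly preserved by any exact equivalence. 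With that adjustment your argument is complete.
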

\begin{proof} By the representability theorem of Ballard-Orlov, the equivalence $F\colon \cdbc{X}\simeq \cdbc{X}$ is a Fourier-Mukai functor, say $F\simeq \fmf{\cplx{K}}{X}{Y}$. By Proposition 2.10 in \cite{HLS08} we know that in the projective context, the kernel $\cplx{K}$ of an equivalence $\Phi_{X\to Y}^{\cplx K}\colon \cdbc{X}\simeq \cdbc{Y}$ has to be of finite homological dimension over both factors $X$ and $Y$. Being $\cplx{K}$ of finite homological dimension over the second factor $Y$,  Lemma 1.8 in \cite{LST10}  implies that $\Phi:=\Phi_{X\to Y}^{\cplx K}$ maps perfect complexes on $X$ to perfect complexes on $Y$, that is, $\Phi(\text{Perf}(X))\subseteq \text{Perf}(Y)$.
Since the quasi-inverse $\Psi=\Phi^{-1}$ is again an integral functor and its kernel  $\bR\dSHom{\cO_{X\times Y}}(\cplx
K,\pi_2^!\cO_Y)$ has also finite homological dimension over $X$ (see Proposition 2.10 in \cite{HLS08}), $\Psi(\text{Perf}(Y))\subseteq \text{Perf}(X)$. Using the above lemma,  one has then a pair of adjoint functors $$\overline{\Phi}\colon D_\text{sg}(X)\to D_\text{sg}(Y),\quad \overline{\Psi}\colon D_\text{sg}(Y)\to D_\text{sg}(X)$$ which are of course quasi-inverse.

\end{proof}

If $\mathcal{D}$ is a triangulated category and $\mathcal{S}$ is a collection of objects in $\mathcal{D}$, we denote $\langle S\rangle_\mathcal{D}$ the smallest thick subcategory of $\mathcal{D}$ containing $\mathcal{S}$. Unifying two results of Orlov, Chen proves in \cite{Chen10} the following result.

\begin{thm}\label{th:chen} Let $X$ be a (ELF) scheme over $k$ and $j\colon U\hookrightarrow X$ an open immersion. Denote by $Z=X/U$ the complement of $U$ and by $Coh_Z(X)\subseteq Coh(X)$ the subcategory of coherent sheaves on $X$ supported in $Z$. Then the induced functor $\bar{j^\ast}\colon  D_\text{sg}(X)\to  D_\text{sg}(U)$ induces a triangle equivalence $$ D_\text{sg}(X)/\langle q(Coh_ZX)\rangle\simeq  D_\text{sg}(U)\, .$$\qed
\end{thm}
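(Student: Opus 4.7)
The plan is to apply the universal property of Verdier quotients twice to produce the comparison functor, and then verify it is an equivalence by establishing essential surjectivity and full faithfulness separately. The pullback $j^\ast\colon \cdbc{X}\to \cdbc{U}$ is exact and sends locally free sheaves of finite rank to locally free sheaves of finite rank, so $j^\ast(\text{Perf}(X))\subseteq \text{Perf}(U)$; by the universal property of Verdier localization it descends to $\bar{j^\ast}\colon D_\text{sg}(X)\to D_\text{sg}(U)$. Since $j^\ast\cF=0$ for every $\cF\in Coh_Z(X)$, the thick subcategory $\langle q(Coh_Z(X))\rangle$ lies in the kernel of $\bar{j^\ast}$, and a second application of the universal property yields an exact functor
$$\Phi\colon D_\text{sg}(X)/\langle q(Coh_Z(X))\rangle \to D_\text{sg}(U).$$

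For essential surjectivity I would invoke the classical noetherian fact that every coherent sheaf on the open subscheme $U$ extends to a coherent sheaf on $X$: given $\cG\in Coh(U)$, one picks a coherent subsheaf $\widetilde{\cG}\subseteq j_\ast\cG$ with $j^\ast\widetilde{\cG}\simeq \cG$. Applied termwise (with bounded truncation), this shows every object of $\cdbc{U}$ is quasi-isomorphic to the restriction of one from $\cdbc{X}$, so $\Phi$ is essentially surjective. The hard part is full faithfulness. Via the calculus of roofs, this reduces to showing that every morphism in $D_\text{sg}(U)$ of the form
$$j^\ast \cplx{F}\xleftarrow{s} \cplx{H}\xrightarrow{t} j^\ast \cplx{G}$$
(with the cone of $s$ perfect on $U$) can be lifted to an analogous roof on $X$. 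Extending $\cplx{H}$, $s$ and $t$ to $\widetilde{\cplx{H}}$, $\tilde s$, $\tilde t$ over $X$ by the same coherent-extension procedure, the cone of $\tilde s$ restricts on $U$ to a perfect complex; the task is then to identify it in $D_\text{sg}(X)$, modulo $\langle q(Coh_Z(X))\rangle$, with a perfect complex on $X$. An entirely parallel argument, applied to factorizations through perfect complexes, settles faithfulness.

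The main obstacle is exactly this identification step. Extending a complex from $U$ to $X$ preserves its restriction but not the property of being perfect, so one must prove that any bounded complex on $X$ whose restriction to $U$ is perfect becomes perfect in $D_\text{sg}(X)/\langle q(Coh_Z(X))\rangle$. This is where the full strength of the (ELF) hypothesis enters: it allows global replacement of local perfect resolutions, so one can compare a genuinely perfect complex on $X$ with the extended $\widetilde{\cplx{H}}$ and absorb the discrepancy into a bounded complex of coherent sheaves supported on $Z$. This spreading-of-roofs argument is the technical heart of Orlov's original proof in the quasi-projective case, which Chen extends to the (ELF) setting in \cite{Chen10}.
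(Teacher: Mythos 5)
The paper does not actually prove this statement: it is quoted verbatim from Chen \cite{Chen10} (who unifies two results of Orlov) and marked as proved elsewhere, so there is no in-paper argument to compare yours against. Judged on its own, your outline gets the formal scaffolding right: $j^\ast$ preserves perfect complexes and kills $Coh_Z X$, so the comparison functor $\Phi$ exists by two applications of the universal property of Verdier quotients, and essential surjectivity follows from the classical fact that bounded coherent complexes extend from $U$ to $X$ (though ``applied termwise'' glosses over extending the differentials compatibly; the standard argument is by induction on the amplitude of the complex, extending one cohomology sheaf at a time).

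The genuine gap is full faithfulness, which is the entire mathematical content of the theorem, and your proposal does not prove it --- it names the difficulty (``any bounded complex on $X$ whose restriction to $U$ is perfect must die in $D_\text{sg}(X)/\langle q(Coh_Z X)\rangle$'') and then appeals to ``Orlov's original proof\dots which Chen extends to the (ELF) setting in \cite{Chen10}'', i.e., to the very result being proved. Two concrete points are left unresolved. First, the kernel computation itself: to show that $j^\ast\cplx{F}$ perfect implies $q(\cplx{F})\in\langle q(Coh_Z X)\rangle$ requires an actual d\'evissage --- Orlov truncates a global locally free resolution (this is where (ELF) enters) above the level at which the restriction to $U$ becomes locally free, and controls the discrepancy by a bounded complex of sheaves supported on $Z$ --- and nothing in your sketch carries this out. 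Second, even granting the kernel computation, fullness of $\Phi$ is not automatic from it: in a roof on $U$ the denominator has perfect cone on $U$, and by the Thomason--Trobaugh obstruction a perfect complex on $U$ need not extend to a perfect complex on $X$ (in general only up to direct summands); one must argue that an arbitrary coherent extension of that cone, while not perfect on $X$, becomes zero in the double quotient, which is again the unproved key lemma. As it stands the proposal is a correct plan whose decisive step is outsourced to the reference.
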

\
He easily deduces the following two corollaries.

\begin{cor} If we denote $\text{Sing}(X)$ the singular locus of $X$, then
\begin{enumerate} \item  $\bar{j^\ast}\colon  D_\text{sg}(X)\to  D_\text{sg}(U)$  is an equivalence if and only if  $\text{Sing}(X)\subseteq U$.
\item $D_\text{sg}(X)=\langle q(Coh_ZX)\rangle$ if and only if  $\text{Sing}(X)\subseteq Z$.
\end{enumerate} \qed
\end{cor}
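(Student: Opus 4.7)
The plan is to deduce both assertions directly from Theorem \ref{th:chen}, which presents $\bar{j^\ast}$ as the Verdier quotient functor $D_{\text{sg}}(X)\to D_{\text{sg}}(X)/\langle q(Coh_Z X)\rangle$.

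For part (1), I would note that a Verdier quotient is an equivalence exactly when its kernel vanishes, so the task reduces to showing $\langle q(Coh_Z X)\rangle=0$ if and only if $\text{Sing}(X)\subseteq U$. For the forward implication, if $\text{Sing}(X)\subseteq U$, then $Z$ lies inside the regular locus of $X$; hence any coherent sheaf $\cF$ supported on $Z$ has all of its stalks over regular local rings of finite Krull dimension and therefore of finite global dimension. Combined with the (ELF) hypothesis, this gives a global bounded locally free resolution of $\cF$, so $\cF\in\text{Perf}(X)$ and $q(\cF)=0$. For the converse, given a point $x\in \text{Sing}(X)\cap Z$, the skyscraper $k(x)$ belongs to $Coh_Z(X)$ but is not perfect at $x$ by the Auslander-Buchsbaum-Serre characterization of regular local rings, so $q(k(x))\neq 0$ and the thick subcategory $\langle q(Coh_Z X)\rangle$ is nontrivial.

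For part (2), I would observe that the equality $D_{\text{sg}}(X)=\langle q(Coh_Z X)\rangle$ is equivalent to the Verdier quotient $D_{\text{sg}}(X)/\langle q(Coh_Z X)\rangle$ being the zero category, which by Theorem \ref{th:chen} means $D_{\text{sg}}(U)=0$. Since $U$ is noetherian and of finite Krull dimension, $D_{\text{sg}}(U)=0$ holds precisely when every bounded complex of coherent sheaves on $U$ is perfect, which is equivalent to $U$ being regular, and over an algebraically closed field of characteristic zero, to $U$ being smooth. This is exactly $\text{Sing}(X)\cap U=\emptyset$, i.e., $\text{Sing}(X)\subseteq Z$.

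The only non-formal ingredient is the converse in (1): one must verify that $k(x)$ at a singular point really survives to a nonzero class in the category of singularities. This is where I expect the main weight of the argument to sit, and it is handled by the Auslander-Buchsbaum-Serre theorem, which identifies finite projective dimension of the residue field over $\cO_{X,x}$ with regularity of $\cO_{X,x}$. Everything else is a formal consequence of the universal property of Verdier quotients together with Theorem \ref{th:chen}.
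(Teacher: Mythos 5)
Your deduction is correct and is exactly the argument the paper has in mind: it presents this corollary as an immediate consequence of Theorem \ref{th:chen} (citing Chen) without further proof, and your reduction to $\langle q(Coh_Z X)\rangle=0$ for (1) and to $D_{\text{sg}}(U)=0$ for (2), with Auslander--Buchsbaum--Serre supplying the non-formal step, is the standard way to carry it out.
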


Let us denote $k(x)$ the skyscraper sheaf of a closed point $x\in X$, since every coherent sheaf supported in $\{x_1,\hdots,x_n\}$ belongs to $\langle k(x_1)\oplus\hdots\oplus k(x_n)\rangle_{D_\text{sg}(X)}$, one get the following

\begin{cor}\label{isolated} A scheme $X$ satisfying the (ELF) condition has only isolated singularities $\{x_1,\hdots,x_n\}$ if and only if 
 $$D_\text{sg}(X)\simeq\langle k(x_1)\oplus \hdots\oplus k(x_n)\rangle_{D_\text{sg}(X)}\, .$$ \qed
 \end{cor}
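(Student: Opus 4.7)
The plan is to derive this corollary directly from part (2) of the previous corollary, with $Z$ chosen to be the finite set of points $\{x_1,\ldots,x_n\}$, reducing everything to the assertion that in $D_{\text{sg}}(X)$ the thick subcategory $\langle q(\text{Coh}_Z X)\rangle$ coincides with $\langle k(x_1)\oplus\cdots\oplus k(x_n)\rangle_{D_{\text{sg}}(X)}$.

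For the \emph{only if} direction, I would assume $\text{Sing}(X)=\{x_1,\ldots,x_n\}$ and set $Z=\{x_1,\ldots,x_n\}$ with the reduced structure. Corollary~2.5(2) gives $D_{\text{sg}}(X)=\langle q(\text{Coh}_Z X)\rangle$. It therefore suffices to show that every coherent sheaf $\mathcal F$ supported on $Z$ already lies in the thick subcategory generated by $k(x_1)\oplus\cdots\oplus k(x_n)$ in $D_{\text{sg}}(X)$ (in fact already in $\cdbc{X}$). Since $Z$ is a disjoint union of closed points, $\mathcal F$ decomposes as a direct sum of sheaves supported at each $x_i$, and the stalk at $x_i$ is an Artinian $\mathcal O_{X,x_i}$-module. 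Any such module admits a finite filtration with residue field quotients, which globalises to a finite filtration of $\mathcal F$ whose successive quotients are copies of $k(x_i)$. Repeated use of the octahedral axiom then places $[\mathcal F]$ in $\langle k(x_1)\oplus\cdots\oplus k(x_n)\rangle_{D_{\text{sg}}(X)}$.

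For the \emph{if} direction, suppose $D_{\text{sg}}(X)\simeq\langle k(x_1)\oplus\cdots\oplus k(x_n)\rangle_{D_{\text{sg}}(X)}$ and set $Z=\{x_1,\ldots,x_n\}$. Each $k(x_i)$ belongs to $\text{Coh}_Z X$, so $\langle k(x_1)\oplus\cdots\oplus k(x_n)\rangle_{D_{\text{sg}}(X)}\subseteq\langle q(\text{Coh}_Z X)\rangle$, and by hypothesis the inclusion must be an equality to the whole of $D_{\text{sg}}(X)$. Corollary~2.5(2) then forces $\text{Sing}(X)\subseteq Z$, showing the singularities are contained in the finite set $\{x_1,\ldots,x_n\}$ and hence are isolated.

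The only genuine content is the filtration argument in the first step; the rest is a direct appeal to Chen's theorem via its Corollary. There is no real obstacle here, since for a coherent sheaf supported on a finite set of closed points the existence of a composition series with skyscraper quotients is a standard consequence of the Nakayama lemma applied to each stalk, and finite iterated extensions automatically stay inside any thick subcategory containing the building blocks.
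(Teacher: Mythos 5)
Your argument is correct and is essentially the paper's own: the paper proves this corollary by the same one-line observation (every coherent sheaf supported on finitely many closed points lies in the thick subcategory generated by the skyscrapers, via a finite filtration with skyscraper quotients) combined with part (2) of the preceding corollary of Chen's theorem. You have merely filled in the routine details of the filtration step that the paper leaves implicit.
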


 \begin{cor} \label{c:isolated} Let $X$ be a projective scheme with $n$ isolated singularities. Then, any   projective Fourier-Mukai partner $Y\in FM(X)$  has also $n$ isolated singularities.\qed
 \end{cor}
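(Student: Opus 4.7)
The plan is to combine Corollary~\ref{Dsg} with Corollary~\ref{isolated}. Since $Y \in FM(X)$, Corollary~\ref{Dsg} produces a triangulated equivalence $\bar{\Phi}\colon D_{\text{sg}}(X) \isom D_{\text{sg}}(Y)$ between the singularity categories. By hypothesis $X$ has $n$ isolated singularities $x_1, \ldots, x_n$, so the forward direction of Corollary~\ref{isolated} identifies
\[
D_{\text{sg}}(X) = \langle q(k(x_1) \oplus \cdots \oplus k(x_n))\rangle_{D_{\text{sg}}(X)}.
\]
Transporting this description through $\bar{\Phi}$ shows that $D_{\text{sg}}(Y)$ is the thick subcategory of itself generated by the $n$ transferred objects $G_i := \bar{\Phi}(q(k(x_i)))$, for $i = 1, \ldots, n$.

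It then remains to deduce that $Y$ itself has $n$ isolated singular points. First I would show that $Y$ has \emph{isolated} singularities: a positive-dimensional component of $\text{Sing}(Y)$ would, via the first corollary following Theorem~\ref{th:chen}, force $D_{\text{sg}}(Y)$ to be too large to be generated as a thick subcategory by finitely many classes whose singular support is a finite set of points. Once $\text{Sing}(Y) = \{y_1, \ldots, y_m\}$ is known to be finite, Corollary~\ref{isolated} applied to $Y$ yields $D_{\text{sg}}(Y) = \langle q(k(y_1) \oplus \cdots \oplus k(y_m))\rangle$. The equality $m = n$ should then follow from the block decomposition $D_{\text{sg}}(Y) \simeq \bigoplus_j D_{\text{sg}}(\mathcal{O}_{Y, y_j})$ into local singularity categories (each summand nonzero because $y_j$ is singular), matched through $\bar{\Phi}$ with the analogous decomposition of $D_{\text{sg}}(X)$ into $n$ nonzero local summands.

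The main obstacle lies in this final step: the transferred generators $G_i \in D_{\text{sg}}(Y)$ are not \emph{a priori} of the form $q(k(y_j))$ for closed points $y_j \in Y$, so the converse direction of Corollary~\ref{isolated} cannot be invoked directly. One must instead argue intrinsically --- first, that $\text{Sing}(Y)$ is a finite set, by ruling out positive-dimensional components from the fact that $D_{\text{sg}}(Y)$ is finitely generated as a thick subcategory (the key input is again the first corollary after Theorem~\ref{th:chen}, applied to suitable open complements); and second, that its cardinality equals $n$, by means of a Krull--Schmidt-type block decomposition argument in the singularity category. This decomposition step is natural but needs the appropriate setup, and is the real technical content behind the ``obvious'' transfer of the invariant ``number of isolated singular points.''
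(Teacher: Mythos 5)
You have correctly reconstructed the route the paper intends (the paper offers no written proof: the corollary is asserted as an immediate consequence of Corollary~\ref{Dsg} and Corollary~\ref{isolated}), and you have put your finger on the genuine subtlety that this chain glosses over: the equivalence $\overline{\Phi}\colon D_{\text{sg}}(X)\to D_{\text{sg}}(Y)$ need not send $q(k(x_i))$ to objects of the form $q(k(y_j))$, so neither the ``isolated'' property nor the count $n$ transfers formally via Corollary~\ref{isolated}. However, your proposed repair of the first step does not work as stated. You want to rule out a positive--dimensional component of $\text{Sing}(Y)$ on the grounds that $D_{\text{sg}}(Y)$ would then be ``too large to be generated as a thick subcategory by finitely many classes whose singular support is a finite set of points.'' This conflates two different conditions. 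Mere finite generation of $D_{\text{sg}}(Y)$ as a thick subcategory carries no information: by Rouquier's theorem, $\cdbc{Y}$ has a classical generator for any scheme of finite type over $k$, and its image classically generates the Verdier quotient $D_{\text{sg}}(Y)$, so $D_{\text{sg}}(Y)$ is generated by a \emph{single} object whether or not the singularities are isolated. What would actually suffice is generation by finitely many objects \emph{each supported at a closed point} (this is the content of the second corollary after Theorem~\ref{th:chen}), but the supports of the transferred generators $G_i=\overline{\Phi}(q(k(x_i)))$ are exactly what you do not control --- so this part of your argument is circular. The standard way to close this gap is to replace generation by a genuinely categorical invariant: by Auslander's criterion, a Gorenstein local ring has an isolated singularity if and only if $\sCM$ is Hom-finite, and globally $X$ has isolated singularities if and only if $D_{\text{sg}}(X)$ is Hom-finite over $k$; Hom-finiteness is manifestly preserved by any $k$-linear equivalence. (Alternatively one can work directly with the Fourier--Mukai kernel, which by Proposition 2.10 of \cite{HLS08} has finite homological dimension over both factors, to compare singular loci.)

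Your second step (the equality $m=n$) is the right idea but is also left incomplete at its crucial point: from two direct sum decompositions $D_{\text{sg}}(X)\simeq\oplus_{i=1}^{n}\mathcal{T}_i$ and $D_{\text{sg}}(Y)\simeq\oplus_{j=1}^{m}\mathcal{S}_j$ into nonzero summands one cannot conclude $n=m$ unless each summand is an \emph{indecomposable} triangulated category, and the indecomposability of $\sCM(\hat{\mathcal{O}}_{x_i})$ for a local isolated Gorenstein singularity is a nontrivial assertion that you neither prove nor cite. (For the curve singularities actually needed in the paper --- nodes, cusps, tacnodes, and the other planar points of Catanese's list --- the categories $\sCM$ are explicitly computed in \cite{YCM90} and one can verify connectedness of the AR-quiver by hand, but the corollary as stated is for arbitrary projective schemes.) So the proposal correctly diagnoses why the paper's one-line deduction is not formal, but the two repairs you sketch are, respectively, based on an invariant that is vacuous (finite generation) and on a block-counting principle that still requires an unproved indecomposability input.
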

 
 \subsection{} An additive category $\mathcal{A}$ is said to be {\it idempotent} if any idempotent morphism $e\colon a\to a$, $e^2=e$, arises from a splitting of $a$, $$a=\Img(e)\oplus\ker(e)\, .$$
 
 Let $\mathcal{A}$ be an additive category. {\it The idempotent completion} of $\mathcal{A}$ is the category $\overline{\mathcal{A}}$ defined as follows. Objects of $\overline{\mathcal{A}}$ are pairs $(a, e)$ where $a$ is an object of $\mathcal{A}$ and $e\colon a\to a$ is an idempotent morphism. A morphism in $\overline{\mathcal{A}}$ from $(a, e)$ to $(b,f)$ is a morphism $\alpha\colon  a\to b$ such that $\alpha\circ e=f\circ \alpha=\alpha$.
 
 There is a canonical fully faithful functor $i\colon \mathcal{A}\to \overline{\mathcal{A}}$ defined by sending $a\to (a,1_a)$.
 
 The following theorem can be found in \cite{BalSch01}.
 
 \begin{thm}\label{t:completion} Let $\mathcal{D}$ be a triangulated category. Then its idempotent completion $\overline{\mathcal{D}}$ admits a unique structure of triangulated category such that the canonical functor $i\colon \mathcal{D}\to \overline{\mathcal{D}}$ becomes an exact functor. If $\overline{\mathcal{D}}$ is endowed with this structure, then for each idempotent complete triangulated category $\mathcal{C}$ the functor $i$ induces an equivalence
$$\Hom(\overline{\mathcal{D}},\mathcal{C})\simeq \Hom(\mathcal{D},\mathcal{C})$$
where $\Hom$ denotes the category of exact functors.
 \end{thm}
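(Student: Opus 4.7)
The plan is to define the triangulated structure on $\overline{\mathcal{D}}$ by declaring a triangle to be distinguished precisely when it is a direct summand of the image under $i$ of a distinguished triangle of $\mathcal{D}$, and then to establish the universal property by splitting idempotents in the target category.

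First I would equip $\overline{\mathcal{D}}$ with the shift $(a,e)[1]:=(a[1],e[1])$, which is visibly an equivalence, and then define the class of distinguished triangles as above. With this choice, TR1(a), TR1(b) and TR2 are immediate, since direct summands are compatible with rotation and with identity triangles. For the completion axiom TR1(c), given a morphism $\alpha\colon (a,e)\to(b,f)$ satisfying $f\circ\alpha\circ e=\alpha$, I would complete $\alpha\colon a\to b$ to a distinguished triangle $a\to b\to c\to a[1]$ in $\mathcal{D}$ and use TR3 in $\mathcal{D}$ to lift the pair $(e,f)$ to an idempotent $g$ on $c$, thereby producing a distinguished triangle in $\overline{\mathcal{D}}$ completing $\alpha$. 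For TR3 in $\overline{\mathcal{D}}$ itself, I would express both given triangles as summands of distinguished triangles from $\mathcal{D}$, solve the filler problem there, and project the resulting third morphism onto the appropriate summand.

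The most delicate step is the octahedral axiom TR4: given composable $(a,e)\to(b,f)\to(c,g)$ in $\overline{\mathcal{D}}$, I would lift the pair to $\mathcal{D}$, apply the octahedron there, and then coherently propagate the three idempotents to the three mapping cones via repeated TR3 liftings, checking that the resulting diagram admits a compatible idempotent endomorphism whose image in $\overline{\mathcal{D}}$ is the required octahedron. For the universal property, given an exact functor $F\colon \mathcal{D}\to\mathcal{C}$ with $\mathcal{C}$ idempotent complete, I would set $\bar{F}(a,e):=\Img(F(e))$ and induce $\bar{F}$ on morphisms via the canonical splittings in $\mathcal{C}$. Exactness of $\bar{F}$ reduces, via the defining property of distinguished triangles in $\overline{\mathcal{D}}$, to the general fact that summands of distinguished triangles in an idempotent complete triangulated category are distinguished, and uniqueness of $\bar{F}$ follows from the splitting $(a,e)\oplus(a,1_a-e)\simeq i(a)$, which forces the value on $(a,e)$ once $F$ is fixed. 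Uniqueness of the triangulated structure itself is then the observation that exactness of $i$ forces every $i$-image of a distinguished triangle, and hence every summand thereof, to be distinguished, and conversely nothing else can be.

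The main obstacle is the octahedral axiom, because one must produce a single coherent set of idempotents on all four mapping cones of the octahedron in $\mathcal{D}$ simultaneously; chaining TR3 liftings produces maps that commute only up to the usual indeterminacy in triangle completions, and one must either tighten the lifts or pass to further summands to force the diagram to commute on the nose. Accordingly, a preliminary lemma I would establish first, since it underlies both the definition of distinguished triangles in $\overline{\mathcal{D}}$ and the exactness of $\bar{F}$, is that in any idempotent complete triangulated category every direct summand of a distinguished triangle is itself distinguished.
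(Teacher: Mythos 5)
First, a point of comparison: the paper does not prove this statement at all --- it is quoted verbatim from Balmer--Schlichting \cite{BalSch01} --- so there is no in-paper argument to measure you against. Your overall strategy (declare a triangle of $\overline{\mathcal{D}}$ distinguished when it is a direct summand of the image under $i$ of a distinguished triangle of $\mathcal{D}$, then verify the axioms and deduce the universal property) is the same as the standard one. But the outline has genuine gaps exactly at the points where the theorem is hard. In TR1(c) you claim to ``use TR3 in $\mathcal{D}$ to lift the pair $(e,f)$ to an idempotent $g$ on $c$''; TR3 does not do this. A filler $g$ of the pair $(e,f)$ is neither unique nor idempotent in general, and this is the central difficulty of the whole construction. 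It can be repaired, but only with an extra argument: if $g$ is any filler then $g^2$ is another, so $h:=g-g^2$ fills $(0,0)$, whence $h\circ v=0$ and $w\circ h=0$; therefore $h$ factors both through $w$ and through $v$, so $h^2$ factors through $w\circ v=0$, i.e.\ $(g-g^2)^2=0$, and the corrected filler $g':=3g^2-2g^3$ is a genuine idempotent completing $(e,f)$. Without this (or the equivalent device of realising the cone of $\alpha\colon(a,e)\to(b,f)$ as a summand of $i(c)$, where $c$ is the cone of $f\alpha e$ in $\mathcal{D}$), the construction of cones in $\overline{\mathcal{D}}$ does not get off the ground.

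Two further problems. For TR4 you candidly describe the obstacle (chained TR3 liftings only commute up to the usual indeterminacy) but offer no resolution beyond ``tighten the lifts or pass to further summands''; that is precisely the step that occupies most of the actual proof, which proceeds differently, by complementing the two composable morphisms to morphisms lying in the image of $i$, applying the octahedron in $\mathcal{D}$, and then carefully splitting the resulting octahedron --- an argument that cannot be reconstructed from your sketch. Finally, your ``preliminary lemma'' (in an idempotent complete triangulated category every direct summand of a distinguished triangle is distinguished) is not an elementary general fact available in advance: it is essentially a corollary of the theorem you are proving, obtained by applying the uniqueness statement to $\overline{\mathcal{C}}\simeq\mathcal{C}$ and observing that the summand-closure of the distinguished triangles is another triangulation for which the identity is exact. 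Founding the verification of exactness of $\bar F$ and the definition of the triangulation on this lemma is therefore circular unless you prove it independently, and an independent proof contains all the difficulty of the theorem itself. So the proposal identifies the right definitions but does not yet constitute a proof.
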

\subsection{}The last tool we will  use  is the theory of maximal Cohen-Macaulay modules over a local ring. Let $(A,\mathfrak{m})$ be a local commutative  ring of Krull dimension $d$. Recall that  a finitely generated $A$-module $M$ is called a {\it maximal Cohen Macaulay} module if $\text{depth}(M)=d$ or equivalently $\text{Ext}^i(A/\mathfrak{m}, M)=0$ for all $i<d$. The category of all maximal Cohen-Macaulay modules over $A$ is denoted by $\CM(A)$. We will also denote $\sCM(A)$ the {\it stable category of Cohen-Macaulay modules} over $A$ which is defined as follows:
\begin{itemize}\item Ob$(\sCM(A))$=Ob($\CM(A)$).
\item $\Hom_{\sCM(A)}(M,N)=\Hom_{\CM(A)}(M,N)/\mathcal{P}(M,N)$ where $\mathcal{P}(M,N)$ is the submodule of $\Hom_{\CM(A)}(M,N)$ generated by those morphisms which factor through a free $A$-module.
\end{itemize}

 The following result is well-known to experts (see \cite{BK11}, \cite{Or09} or \cite{IyWe11}).

 \begin{thm}\label{t:equivalencia} Suppose that $X$ is  a Gorenstein scheme over $k$ satisfying the (ELF) condition and only  with isolated singularities $\{x_1,\hdots, x_n\}$. Then,
 there is a fully faithful functor $D_\text{sg}(X)\hookrightarrow \oplus_{i=1}^n \sCM(\mathcal{O}_{x_i})$ which is essentially surjective up to direct summands. Taking the idempotent completion gives an equivalence of categories $$\overline{D_\text{sg}(X)}\simeq \oplus_{i=1}^n\sCM(\hat{\mathcal{O}} _{x_i})$$ where $\hat{\mathcal{O}} _{X,x_i}$ is the completion of the local ring $\mathcal{O}_{X,x_i}$ at the point  $x_i$. \qed
 \end{thm}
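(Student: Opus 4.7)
The plan is to combine Buchweitz's classical theorem — for any Gorenstein local ring $(A,\mathfrak{m})$ there is a natural triangulated equivalence $D_\text{sg}(A)\simeq \sCM(A)$ — with the Orlov-type localization of Theorem~\ref{th:chen}, reducing the global statement on $X$ to a pointwise analysis at each singular point and then to the complete local case by idempotent completion.

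First I would apply Theorem~\ref{th:chen} to the open immersion $j\colon U:=X\setminus\{x_1,\ldots,x_n\}\hookrightarrow X$. Because $X$ is regular on $U$, we have $D_\text{sg}(U)=0$, so by Corollary~\ref{isolated} the category $D_\text{sg}(X)$ is the thick envelope of $q(k(x_1)\oplus\cdots\oplus k(x_n))$. For each $i$, stalk localization $\cplx{F}\mapsto \cplx{F}_{x_i}$ preserves perfectness and therefore descends to an exact functor $\lambda_i\colon D_\text{sg}(X)\to D_\text{sg}(\mathcal{O}_{X,x_i})$. Since any coherent sheaf set-theoretically supported at $x_j$ with $j\neq i$ has vanishing stalk at $x_i$, the combined map factors through the direct sum, and Buchweitz's theorem applied pointwise yields the candidate functor
$$\Theta\colon D_\text{sg}(X)\longrightarrow \oplus_{i=1}^n\sCM(\mathcal{O}_{X,x_i}).$$

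To establish full faithfulness it suffices, thanks to the generation result above, to compute $\Hom_{D_\text{sg}(X)}(\cplx{F},\cplx{G})$ when $\cplx{F},\cplx{G}$ are coherent sheaves supported on the finite singular set: a standard calculation with perfect approximations shows these morphism groups split as a direct sum indexed by the $x_i$, each summand matching the corresponding Hom-group in $\sCM(\mathcal{O}_{X,x_i})$. Essential surjectivity up to direct summands requires that any maximal Cohen--Macaulay module $M$ over $\mathcal{O}_{X,x_i}$ be, modulo a summand, the stalk at $x_i$ of a coherent sheaf on $X$ supported there; this is a Grothendieck--Elkik style algebraization argument. For the second statement, observe that $\sCM(\hat{\mathcal{O}}_{X,x_i})$ is idempotent complete (Krull--Schmidt over a complete Noetherian local ring) and that the completion functor $\sCM(\mathcal{O}_{X,x_i})\to\sCM(\hat{\mathcal{O}}_{X,x_i})$ is fully faithful with image dense up to summands. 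The universal property in Theorem~\ref{t:completion} then promotes $\Theta$ to a triangulated equivalence
$$\overline{D_\text{sg}(X)}\simeq \oplus_{i=1}^n\sCM(\hat{\mathcal{O}}_{X,x_i}).$$

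The main obstacle I anticipate is the essential surjectivity step at the uncompleted level: producing a global coherent sheaf whose stalk realizes, up to a summand, a prescribed maximal Cohen--Macaulay module is genuinely delicate and is handled by different techniques in the three references — formal-to-algebraic algebraization in \cite{Or09}, matrix-factorization style arguments in \cite{BK11}, and the general framework of singular equivalences in \cite{IyWe11}. Once this is in place, the rest is a formal manipulation of adjunctions, Buchweitz's local equivalence, and the idempotent completion universal property.
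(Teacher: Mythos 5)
The paper does not actually prove this theorem: it is stated with a \qed{} and attributed to \cite{BK11}, \cite{Or09} and \cite{IyWe11}, so there is no internal argument to compare against. Your outline reproduces the standard route from those sources --- Buchweitz's equivalence $D_\text{sg}(A)\simeq\sCM(A)$ for a Gorenstein local ring, the Orlov--Chen localization of Theorem~\ref{th:chen} to reduce to the singular points, and idempotent completion together with algebraization to pass to the complete local rings --- and at the level of architecture it is the right proof.

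The one step that is wrong as written is your essential-surjectivity claim. You ask that every maximal Cohen--Macaulay module $M$ over $\mathcal{O}_{X,x_i}$ be, modulo a summand, the stalk at $x_i$ of a coherent sheaf on $X$ \emph{supported at} $x_i$. For $\dim X\geq 1$ this is impossible on the nose: such a stalk has finite length, hence depth $0$, whereas a nonzero MCM module has depth equal to $\dim\mathcal{O}_{X,x_i}>0$. The correct mechanism --- and the place where part (2) of the corollary to Theorem~\ref{th:chen}, $D_\text{sg}(X)=\langle q(\mathrm{Coh}_Z(X))\rangle$ for $Z=\text{Sing}(X)$, actually gets used --- is that $D_\text{sg}(\mathcal{O}_{X,x_i})$ is the thick envelope of the finite-length modules, and every finite-length $\mathcal{O}_{X,x_i}$-module \emph{is} the stalk of a sheaf supported at $x_i$ (pushforward from a thickening of the point). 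Once full faithfulness is known, the essential image of $\Theta$ is a triangulated subcategory containing these generators, and every object of their thick closure is a direct summand of an object of the triangulated subcategory they generate; this yields essential surjectivity up to summands with no algebraization at all --- ordinary extension of coherent sheaves from an open subset is the only extension result needed. Where Elkik-type algebraization genuinely enters is the step you mention last, namely that $\sCM(\mathcal{O}_{X,x_i})\to\sCM(\hat{\mathcal{O}}_{X,x_i})$ is dense up to direct summands, i.e.\ that $\overline{\sCM(\mathcal{O}_{X,x_i})}\simeq\sCM(\hat{\mathcal{O}}_{X,x_i})$; you have in effect counted that difficulty twice and located it once in the wrong place. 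With the essential-surjectivity step restated as above, the remainder of your argument (finite length of stable Hom-groups for isolated singularities giving full faithfulness of the completion functor, Krull--Schmidt for $\sCM(\hat{\mathcal{O}}_{X,x_i})$ giving idempotent completeness, and the universal property of Theorem~\ref{t:completion}) goes through.
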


\subsection{Proof of Theorem 1.1} Let $X$ be a projective reduced Gorenstein curve of arithmetic genus one and trivial dualising sheaf and let  $Y\in FM(X)$ be a projective Fourier-Mukai partner.
By the previous discussion, we already know that $Y$ is a Gorenstein projective curve of arithmetic genus one and trivial dualising sheaf. Since $X$ has only isolated singularities, the same is true for $Y$ which means that $Y$ is also reduced. Then, $Y$ belongs to Catanese's list. 
 
 If $X$ is a smooth elliptic curve, $Y$ has to be also smooth and we already know that it is isomorphic to $X$. 
  
The number of irreducible components for this kind of curves is also invariant under Fourier-Mukai transforms. Indeed, Proposition 2.3 in \cite{HLST09}, proves that if $Z$ is any reduced connected projective curve such that every irreducible component is isomorphic to a projective line, then $K_\bullet (Z)\simeq \mathbb{Z}^{N+1}$ where $N$ is the number of irreducible components. Since  one has a group isomorphism $K_\bullet(X)\simeq K_\bullet (Y)$ between the Grothendieck groups of both curves, $X$ and $Y$ have the same number of irreducible components. By Corollary \ref{c:isolated},  they also have the same number of singular points, and this allows to distinguish the case (1.5) from the case (1.4) with $N=2$. Hence,  taking into account Catanese's classification, the only thing to check is that a rational curve with one node cannot be a Fourier-Mukai partner of a rational curve with one cusp.

Let $X$ be a rational curve with one node $P$ and $Y$ a rational curve with one cusp $Q$ and suppose that $Y\in FM(X)$.  By Corollary 
\ref{Dsg}, they have equivalent categories of singularities $D_\text{sg}(X)\simeq D_{sg}(Y)$. By Theorem \ref{t:completion},  taking the idempotent completion gives an equivalence of categories $\overline{D_{sg}(X)}\simeq \overline{D_{sg}(Y)}$, and applying  now Theorem \ref{t:equivalencia}, we get an exact equivalence
\begin{equation} \sCM(R)\simeq \sCM(S)	\label{eqCM}
\end{equation}
between the stable categories of Cohen-Macaulay modules on  $R={\hat{\mathcal{O}}_{X,P}}$ and on $S:=\hat{\mathcal{O}}_{Y,Q}$. Notice that $$R\simeq k[[x,y]]/(x^2+y^2)$$ is a simple singularity of type $A_1$ while  $$S\simeq k[[x,y]]/(x^2+y^3)$$ is a simple singularity of type $A_2$. Both 
 are complete local rings containing a field, then analytic algebras with trivial valuation.
Since the equivalence \eqref{eqCM} is exact, it has to map indecomposable objects to indecomposable objects, so it establishes an one-to-one correspondence between the set of isomorphism classes of indecomposable Cohen-Macaulay $R$-modules and the set of isomorphism classes of indecomposable Cohen-Macaulay $S$-modules. In other words,  a one-to-one correspondence between the set  $v(\Gamma(R))$ of vertices of the AR-quiver (Auslander-Reiten-quiver) of $R$ and the set $v(\Gamma(S))$ of vertices of the AR-quiver of  $S$. 
But it is absurd because, as one can see in \cite{YCM90} where both quivers are computed, the set of  isomorphism classes of indecomposable Cohen-Macaulay $R$-modules is
$$v(\Gamma(R))=\{ [R], [N_{-}], [N_+]\}$$ where $N_{-} =R/(y-ix)$ and $N_+=R/(y+ix)$  while 
 $$v(\Gamma(S))=\{ [S], [I]\}$$ being $I=\langle x,y\rangle$. \qed
 
  \bibliographystyle{siam}

\begin{thebibliography}{10}

\bibitem{Ballard09}
{\sc R.~M. Ballard}, {\em Equivalences of derived categories of sheaves on
  quasi-projective schemes}.
\newblock math. AG. 09053148v2.

\bibitem{BalSch01}
{\sc P.~Balmer and M.~Schlichting}, {\em
  Idempotent completion of triangulated categories},
  J. Algebra, 236, (2001),  pp.~819--824.


\bibitem{BBH08}
{\sc C.~Bartocci, U.~Bruzzo, and D.~Hern{\'a}ndez~Ruip{\'e}rez}, {\em
  Fourier-{M}ukai and {N}ahm transforms in geometry and mathematical physics},
  vol.~276 of Progress in Mathematics, Birkh\"auser Boston Inc., Boston, MA,
  2009.
  
 
  
\bibitem{Bri98}
{\sc T.~Bridgeland}, {\em Fourier-{M}ukai transforms for elliptic surfaces}, J.
  Reine Angew. Math., 498 (1998), pp.~115--133.

\bibitem{Bri98t}
\leavevmode\vrule height 2pt depth -1.6pt width 23pt, {\em {F}ourier-{M}ukai
  Transforms for Surfaces and Moduli Spaces of Stable Sheaves}, PhD thesis,
  University of Edinburgh, 1998.

\bibitem{Bri99}
\leavevmode\vrule height 2pt depth -1.6pt width 23pt, {\em Equivalences of
  triangulated categories and {F}ourier-{M}ukai transforms}, Bull. London Math.
  Soc., 31 (1999), pp.~25--34.

\bibitem{BK11}
{\sc I.~Burban and M.~Kalck}, {\em Singularity category of a non-commutative
  resolution of singularities}.
\newblock arXiv:1103.3936.

\bibitem{BuKr05}
{\sc I.~Burban and B.~Kreu{\ss}ler}, {\em Derived categories of irreducible
  projective curves of arithmetic genus one}, Compos. Math., 142 (2006),
  pp.~1231--1262.

\bibitem{Can91}
{\sc P.~Candelas, X.~C. de~la Ossa, P.~S. Green, and L.~Parkes}, {\em A pair of
  {C}alabi-{Y}au manifolds as an exactly soluble superconformal theory},
  Nuclear Phys. B, 359 (1991), pp.~21--74.

\bibitem{Cata82}
{\sc F.~Catanese}, {\em Pluricanonical-{G}orenstein-curves}, in Enumerative
  geometry and classical algebraic geometry ({N}ice, 1981), vol.~24 of Progr.
  Math., Birkh\"auser Boston, Boston, MA, 1982, pp.~51--95.

\bibitem{Chen10}
{\sc X.-W. Chen}, {\em Unifying two results of {O}rlov on singularity
  categories}, Abh. Math. Semin. Univ. Hambg., 80 (2010), pp.~207--212.

\bibitem{ChuLo}
{\sc W.-Y. Chuang and J.~Lo}, {\em Stability and fourier-mukai transforms on on
  higher dimensional elliptic fibrations}.
\newblock {\tt arXiv:1307.1845}.

\bibitem{Don98}
{\sc R.~Donagi}, {\em Taniguchi lectures on principal bundles on elliptic
  fibrations}, in Integrable systems and algebraic geometry (Kobe/Kyoto, 1997),
  World Sci. Publishing, River Edge, NJ, 1998, pp.~33--46.

\bibitem{DoPan03}
{\sc R.~Donagi and T.~Pantev}, {\em Torus fibrations, gerbes, and duality},
  Mem. Amer. Math. Soc., 193, No. 901 (2008).

\bibitem{DoPan12}
{\sc R.~Donagi and T.~Pantev}, {\em Langlands duality for {H}itchin systems},
  Invent. Math., 189 (2012), pp.~653--735.

\bibitem{HLST09}
{\sc D.~Hern{\'a}ndez~Ruip{\'e}rez, A.~C. L{\'o}pez~Mart{\'{\i}}n,
  D.~S{\'a}nchez~G{\'o}mez, and C.~Tejero~Prieto}, {\em Moduli spaces of
  semistable sheaves on singular genus 1 curves}, Int. Math. Res. Not. IMRN,
  (2009), pp.~4428--4462.

\bibitem{HLS07}
{\sc D.~Hern\'andez~Ruip\'erez, A.~C. L\'opez~Mart\'{\i}n, and F.~Sancho~de
  Salas}, {\em {F}ourier-{M}ukai transform for {G}orenstein schemes}, Adv.
  Math., 211 (2007), pp.~594--620.

\bibitem{HLS08}
{\sc D.~Hern{\'a}ndez~Ruip{\'e}rez, A.~C. L{\'o}pez~Mart{\'{\i}}n, and
  F.~Sancho~de Salas}, {\em Relative integral functors for singular fibrations
  and singular partners}, J. Eur. Math. Soc. (JEMS), 11 (2009), pp.~597--625.

\bibitem{HMP02}
{\sc D.~Hern{\'a}ndez~Ruip{\'e}rez and J.~M. Mu{\~n}oz~Porras}, {\em Stable
  sheaves on elliptic fibrations}, J. Geom. Phys., 43 (2002), pp.~163--183.

\bibitem{HiVdB}
{\sc L.~Hille and M.~D. Van~den Bergh}, {\em {F}ourier-{M}ukai transforms}, in
  {H}andbook on tilting theory, vol.~332 of London Math. Soc. Lecture Note
  Series, Cambridge Univ. Press, 2007.

\bibitem{HKTY}
{\sc S.~Hosono, A.~Klemm, S.~Theisen, and S.-T. Yau}, {\em Mirror symmetry,
  mirror map and applications to complete intersection {C}alabi-{Y}au spaces},
  Nuclear Phys. B, 433 (1995), pp.~501--552.

\bibitem{IyWe11}
{\sc O.~Iyama and M.~Wemyss}, {\em Singular derived categories of
  $\mathbb{Q}$-factorial terminalizations and maximal modification algebras}.
\newblock arXiv:1108.4518.

\bibitem{Kaw02a}
{\sc Y.~Kawamata}, {\em {$D$}-equivalence and {$K$}-equivalence}, J.
  Differential Geom., 61 (2002), pp.~147--171.

\bibitem{Kon95}
{\sc M.~Kontsevich}, {\em Homological algebra of mirror symmetry}, in
  Proceedings of the International Congress of Mathematicians, Vol.\ 1, 2
  (Z\"urich, 1994), Basel, 1995, Birkh\"auser, pp.~120--139.

\bibitem{Lo}
{\sc J.~Lo}, {\em Stability and fourier-mukai transforms on elliptic
  fibrations}.
\newblock {\tt arXiv:1206.4281v2}.

\bibitem{LM05a}
{\sc A.~C. L{\'o}pez~Mart{\'{\i}}n}, {\em Relative {J}acobians of elliptic
  fibrations with reducible fibers}, J. Geom. Phys., 56 (2006), pp.~375--385.

\bibitem{LST10}
{\sc A.~C. L{\'o}pez~Mart{\'{\i}}n, D.~S\'anchez~G\'omez, and
  C.~Tejero~Prieto}, {\em Relative fourier-mukai functors for {W}eierstra{\ss}
  fibrations, abelian schemes and fano fibrations}, doi:
  10.1017/S0305004113000029, Math. Proc. Camb. Phil. Soc., 00 (2013),
  pp.~1--25.
  
  \bibitem{OL10}
{\sc V.~Lunts, D.~Orlov}, {\em Uniqueness of enhancement for triangulated categories}, J. Amer. Math. Soc., 23, 3 (2010),
  pp.~853--908.


\bibitem{Or09}
{\sc D.~Orlov}, {\em Formal completions and idempotent completions of
  triangulated categories of singularities}, Adv. Math., 226 (2011),
  pp.~206--217.

\bibitem{Or97}
{\sc D.~O. Orlov}, {\em Equivalences of derived categories and ${K}3$
  surfaces}, J. Math. Sci. (New York), 84 (1997), pp.~1361--1381.
\newblock Algebraic geometry, 7.

\bibitem{Or003}
\leavevmode\vrule height 2pt depth -1.6pt width 23pt, {\em Derived categories
  of coherent sheaves and equivalences between them}, Uspekhi Mat. Nauk, 58
  (2003), pp.~89--172.

\bibitem{Or03}
\leavevmode\vrule height 2pt depth -1.6pt width 23pt, {\em Triangulated
  categories of singularities and {D}-branes in {L}andau-{G}inzburg models},
  Tr. Mat. Inst. Steklova, 246 (2004), pp.~240--262.


\bibitem{Ue04}
{\sc H.~Uehara}, {\em An example of {F}ourier-{M}ukai partners of minimal
  elliptic surfaces}, Math. Res. Lett., 11 (2004), pp.~371--375.

\bibitem{YCM90}
{\sc Y.~Yoshino}, {\em Cohen-{M}acaulay modules over {C}ohen-{M}acaulay rings},
  vol.~146 of London Mathematical Society Lecture Note Series, Cambridge
  University Press, Cambridge, 1990.

\end{thebibliography}

\def\cprime{$'$}

  \end{document}